\newcommand{\Co}{\mathcal{O}}
\newcommand{\Supp}{\mathrm{Supp}}
\newcommand{\vol}{\mathrm{vol}}
\theoremstyle{plain} % 'this is the initial setting and can be omitted here'
\newtheorem{thm}{Theorem}[section] % number like 3.1, 3.2, 3.3, etc.
\theoremstyle{definition} % 'here we change the style'
\theoremstyle{remark} % 'style changed again'
\begin{document}
	\title{The volume function is upper semicontinuous on families of divisors}
	
	\author{Junpeng Jiao}
	\email{jiao$\_$jp@tsinghua.edu.cn}
	\address{Yau Mathematical Sciences Center, Tsinghua University, Beijing, China}
	
	\keywords{}

	\begin{abstract}
		We study the behavior of volumes of divisors in a family. We show that the volume of a divisor on the generic fiber equals the infimum of its volumes on fibers over any dense subset of the base. As an application, we show that the volume function of a divisor is upper semicontinuous in flat families with reduced and irreducible fibers.
	\end{abstract}
	
	\maketitle
	Throughout this paper, we work over algebraically closed number fields of characteristic zero. 
	\section{Introduction}
	In algebraic geometry, divisors play a central role in encoding geometric and arithmetic properties of varieties. From defining embeddings into projective spaces to controlling singularities in birational geometry, divisors are indispensable tools. Among the many invariants associated with divisors, the volume stands out as a measure of their asymptotic ``size" or ``complexity." Introduced as a higher-dimensional analog of the degree of a line bundle on a curve, the volume of a divisor captures the asymptotic growth rate of the dimension of its space of global sections. This invariant has become a cornerstone in modern birational geometry, with deep connections to the minimal model program, Fujita-type approximation theorems, and the study of Okounkov bodies. 
	
	The volume of a Cartier divisor $D$ on a $d$-dimensional projective variety $X$ over a field $K$ is defined as
	$$\vol(D)=\begin{cases}
		\limsup\limits_{n\rightarrow +\infty} \frac{\mathrm{dim}_KH^0(X,\Co_X(nD))}{n^d/d!},&\text{ if $D$ is pseudo-effective,}\\
		-\infty, &\text{ if $D$ is not.}
	\end{cases}$$
	Unlike traditional invariants such as the Euler characteristic or Hilbert polynomial coefficients, the volume is not a constructible function in flat families: there exist flat families of divisors which are big on infinitely many fibers but not on the generic fiber, see \cite{PS16} for an example. Generally, for a flat family of divisors, the volume on the generic fiber equals the volume on very general fibers but may jump up on fibers over a countable union of locally closed subsets of the base. 
	
	In this paper, we prove that the volume on the generic fiber actually equals the infimum of volumes on fibers over any Zariski dense subset of the base. Note this infimum may not be achieved (i.e., it may not be a minimum) if this dense subset does not contain any very general point.
	
	\begin{thm}\label{Main theorem: densely semicontinuous of volumes}
		Let $f: X\rightarrow T$ be a projective flat morphism of varieties and $D$ a Cartier divisor on $X$. Suppose $X_\eta$ is the generic fiber of $f$. Write $D_\eta:=D|_{X_\eta}$ and $ D_t:=D|_{X_t}$ for every closed point $t\in T$, then
		$$\vol(D_\eta)=\inf_{t\in T'} \vol(D_t)$$
		for any dense subset $T'\subset T$.
	\end{thm}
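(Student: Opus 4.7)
I split the equality $\vol(D_\eta)=\inf_{t\in T'}\vol(D_t)$ into two inequalities and prove them separately. For $\inf_{t\in T'}\vol(D_t)\ge \vol(D_\eta)$, after replacing $D$ by a sufficiently divisible Cartier multiple we may assume $D$ is Cartier. Grauert's upper semicontinuity of $h^0$ in proper flat families then gives $h^0(X_t,mD_t)\ge h^0(X_\eta,mD_\eta)$ for every closed $t\in T$ and every $m\ge 1$. Dividing by $m^d/d!$ and taking $\limsup_{m\to\infty}$ yields $\vol(D_t)\ge \vol(D_\eta)$ for every closed $t$, hence the inequality for any subset $T'\subset T$.

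\textbf{Reducing the hard direction.} Two Zariski-dense subsets of $T$ can be disjoint, so knowing only that the very general locus $\{t:\vol(D_t)=\vol(D_\eta)\}$ is nonempty does not suffice to produce a point of $T'$ with small volume. I therefore reduce the inequality $\inf_{t\in T'}\vol(D_t)\le \vol(D_\eta)$ to the following claim: for every $\epsilon>0$ there exists a \emph{nonempty Zariski-open} subset $U_\epsilon\subset T$ on which $\vol(D_t)<\vol(D_\eta)+\epsilon$. Since any Zariski-dense $T'$ meets every nonempty Zariski-open subset, this gives a closed point $t_\epsilon\in T'$ with $\vol(D_{t_\epsilon})<\vol(D_\eta)+\epsilon$, and letting $\epsilon\to 0$ yields the desired bound.

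\textbf{Constructing $U_\epsilon$ and the main obstacle.} Assume first that $D_\eta$ is big. Apply Fujita approximation on $X_\eta$ to find a proper birational $\mu_\eta:Y_\eta\to X_\eta$ and a decomposition $\mu_\eta^*D_\eta=A_\eta+E_\eta$ with $A_\eta$ an ample $\mathbb{Q}$-divisor, $E_\eta$ effective, and $(A_\eta^d)>\vol(D_\eta)-\epsilon/2$. Spreading the data $\mu_\eta, A_\eta, E_\eta$ over a nonempty Zariski-open $V\subset T$ produces $\mu:Y\to X_V$ together with $\mathbb{Q}$-divisors $A$ (relatively ample) and $E$ (effective) satisfying $\mu^*D|_{X_V}=A+E$; by flat-invariance of intersection numbers $(A_t^d)=(A_\eta^d)$ for every closed $t\in V$, and $\vol(D_t)=\vol(A_t+E_t)$ by birational invariance. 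The problem then reduces to bounding the volume excess $\vol(A_t+E_t)-(A_t^d)<\epsilon$ on a nonempty open $U_\epsilon\subset V$, given only that the analogous bound is $\le\epsilon/2$ at the generic point. This is the crux: unlike intersection numbers, $\vol(A_t+E_t)$ is not a flat-family invariant and can genuinely jump up on special fibers, and none of the standard asymptotic invariants of divisors (Zariski decompositions, augmented base loci, Okounkov bodies) is \emph{a priori} known to vary Zariski-constructibly on $T$. I expect to control the excess either by spreading a sufficiently generic admissible flag from $Y_\eta$ to nearby fibers and comparing the resulting Okounkov bodies of $D_t$ with $\Delta(D_\eta)$, or by a uniform relative-Fujita-approximation argument on $Y\to V$; producing this uniform approximation without extra hypotheses on the fibers (such as the reduced-and-irreducible condition used later for the upper-semicontinuity corollary) is the principal technical hurdle. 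The case $\vol(D_\eta)=0$ reduces to the big case by replacing $D$ with $D+\delta H$ for an ample Cartier $H$ on $X$ and small rational $\delta>0$, using the monotonicity $\vol(D_t)\le\vol(D_t+\delta H_t)$ and $\vol(D_\eta+\delta H_\eta)\to 0$ as $\delta\to 0$.
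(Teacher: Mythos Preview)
Your setup matches the paper's and you have correctly located the crux: after spreading the generic Fujita approximation $\mu^*D=A+E$ over an open $V\subset T$, one must bound $\vol(A_t+E_t)-A_t^d$ uniformly in $t$. You leave this unresolved, and your proposed routes (spreading Okounkov flags, or a uniform relative Fujita approximation) are not what the paper does. The missing idea is that although $\vol(A_t+E_t)$ is not a flat invariant, the intersection number $A_t^{d-1}\cdot E_t$ \emph{is}, and Lazarsfeld's estimate \cite[Theorem~11.4.21]{Laz04} bounds it on the generic fiber: whenever the Fujita defect $\vol(D_\eta)-A_\eta^d$ is at most $\epsilon$ one has $(A_\eta^{d-1}\cdot E_\eta)^2\le C\cdot(H_\eta^d)\cdot\epsilon$ for a fixed sufficiently ample $H$ on $X$. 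Hence $A_t^{d-1}\cdot(A_t+E_t)\le A_t^d+C'\sqrt{\epsilon}$ for every $t\in V$.

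To convert this into a volume bound, the paper applies a \emph{second} Fujita approximation, this time on the closed fiber $Y_t$: write $h_t^*(A_t+E_t)\sim_{\mathbb{Q}}A'_t+E'_t$ with $A'_t$ ample, $E'_t\ge 0$, and $A_t'^d\ge\vol(D_t)-\epsilon'$. The projection formula gives $(h_t^*A_t)^{d-1}\cdot A'_t\le A_t^{d-1}\cdot(A_t+E_t)$, and the generalized Hodge-type inequality \cite[Theorem~1.6.1]{Laz04} gives $(A_t^d)^{(d-1)/d}(A_t'^d)^{1/d}\le(h_t^*A_t)^{d-1}\cdot A'_t$. Chaining these yields
\[
\vol(D_t)\le\Bigl((A_t^d)^{1/d}+\frac{C'\sqrt{\epsilon}}{(A_t^d)^{(d-1)/d}}\Bigr)^d+\epsilon',
\]
which tends to $\vol(D_\eta)$ as $\epsilon,\epsilon'\to 0$ since $A_t^d=A_\eta^d\in[\vol(D_\eta)-\epsilon,\vol(D_\eta)]$. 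Two smaller gaps: the paper runs the first Fujita approximation on the \emph{geometric} generic fiber $X_{\bar\eta}$ and then descends via a finite base change of $T$ (your $X_\eta$ is not over an algebraically closed field, so Fujita approximation as usually stated does not directly apply); and your non-big reduction covers only the pseudo-effective case $\vol(D_\eta)=0$, whereas the paper also handles $D_\eta$ not pseudo-effective, where $\vol(D_\eta)=-\infty$ by convention and one must exhibit some $t\in T'$ with $D_t$ not pseudo-effective.
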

	
	As an application, we show that the volume function is actually upper semicontinuous for a flat morphism with reduced and irreducible fibers.
	\begin{thm}\label{upper semicontinuous of volume}
		Let $f:X\rightarrow T$ be a projective flat morphism of varieties with reduced and irreducible fibers and $D$ a Cartier divisor on $X$. Then for any $a\in \mathbb{R}$, the set
		$$\{t\in T\mid \vol(D_t)<a\}$$
		is open in $T$. 
	\end{thm}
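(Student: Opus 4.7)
The plan is to prove the complement $C := \{t \in T \mid \vol(D_t) \geq a\}$ is closed in $T$, and I will do this by showing that every irreducible component of the Zariski closure $\overline{C}$ already lies in $C$.

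First, fix an irreducible component $Z \subseteq \overline{C}$. Writing $\overline{C} = \bigcup_i \overline{C \cap Z_i}$ as a finite union along the decomposition of $\overline{C}$ and using irreducibility, one checks that $C \cap Z$ is Zariski dense in $Z$. Next, I would restrict the family to $Z$: set $X_Z := f^{-1}(Z)$ and let $f_Z : X_Z \to Z$ be the restriction. This is where the reduced-and-irreducible-fibers hypothesis enters. Reducedness of $X_Z$ follows from flatness, reducedness of $Z$, and reduced fibers (which in characteristic zero are automatically geometrically reduced); irreducibility of $X_Z$ follows because flatness sends associated points of $X_Z$ to associated points of the integral base $Z$, forcing them into the integral generic fiber $X_{\eta_Z}$, which has a unique associated point. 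Hence $X_Z$ is a variety, $f_Z$ is projective flat, and $D|_{X_Z}$ is $\mathbb{Q}$-Cartier, so the hypotheses of Theorem \ref{Main theorem: densely semicontinuous of volumes} are met.

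I would then apply Theorem \ref{Main theorem: densely semicontinuous of volumes} to $f_Z$ twice. Taking the dense subset $T' = Z \cap C$ yields
\[
\vol\bigl(D|_{X_{\eta_Z}}\bigr) \;=\; \inf_{t \in Z \cap C}\vol(D_t) \;\geq\; a,
\]
since $\vol(D_t) \geq a$ for every $t \in C$. Taking instead $T' = Z$ itself gives
\[
\vol(D_{t_0}) \;\geq\; \inf_{t \in Z}\vol(D_t) \;=\; \vol\bigl(D|_{X_{\eta_Z}}\bigr) \;\geq\; a
\]
for every closed point $t_0 \in Z$. Hence $Z \subseteq C$, and letting $Z$ range over the irreducible components of $\overline{C}$ gives $\overline{C} = C$.

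The step I expect to be the main obstacle is the geometric verification in the second paragraph: namely, that $X_Z$ really is an integral scheme and that $D|_{X_Z}$ is legitimately $\mathbb{Q}$-Cartier, so that Theorem \ref{Main theorem: densely semicontinuous of volumes} applies to the restricted family. The reduced-and-irreducible-fibers hypothesis is used precisely here; without it, $f^{-1}(Z)$ could fail to be a variety and the reduction to the generic fiber of the restricted family would break down. Once this setup is in place, the semicontinuity statement is just two direct applications of the preceding theorem.
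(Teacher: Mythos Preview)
Your proof is correct and follows essentially the same route as the paper: restrict to an irreducible component $Z$ of $\overline{C}$, apply Theorem~\ref{Main theorem: densely semicontinuous of volumes} to the restricted family with dense subset $C\cap Z$, and conclude. The only cosmetic differences are that the paper phrases the argument as a contradiction and invokes upper semicontinuity of cohomology directly for the inequality $\vol(D_{t_0})\geq \vol(D_{\eta_Z})$ rather than a second application of Theorem~\ref{Main theorem: densely semicontinuous of volumes}; your explicit check that $X_Z$ is integral is a point the paper leaves implicit.
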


	\section{Proof of Main Theorems}
	\begin{proof}[{Proof of Theorem \ref{Main theorem: densely semicontinuous of volumes}}]
		By the upper semicontinuity of cohomology, we have
		$$\vol(D_\eta)\leq \inf _{t\in T'} \vol(D_t).$$
		Suppose the result holds when $D_\eta$ is big. If $D_\eta$ is not big, we choose an ample divisor $L$ on $X$. Define
		$$\rho:=\inf\{\gamma\in \mathbb{R}_+\mid D_\eta+\gamma L_\eta \text{ is big}\}.$$
		By applying the result on $D+\gamma L$ for every $\gamma >\rho$, we have
		$$\vol(D_\eta+\gamma L_\eta)=\inf _{t\in T'} \vol(D_t+\gamma L_t).$$
		Consider the following two cases.
		
		Case 1: If $D_\eta$ is pseudo-effective, then $\rho=0$. By letting $\gamma\rightarrow 0+$, we have
		$$\vol(D_\eta)=\inf _{t\in T'} \vol(D_t)=0.$$
		
		Case 2: If $D_\eta$ is not pseudo-effective, then $\rho>0$. By letting $\gamma\rightarrow \rho+$, we have $\vol(D_\eta+\rho L_\eta)=\inf _{t\in T'} \vol(D_t+\rho L_t)=0$. Since $L$ is ample over $T$, $\vol(\gamma L_t)$ is independent of $t$, then there exists some $t\in T'$ such that $ \vol(D_t+\gamma L_t)<\vol(\gamma L_t)$, which means $D_t$ is not pseudo-effective. Then
		$$\vol(D_\eta)=\inf _{t\in T'} \vol(D_t)=-\infty.$$
	
		From now we may assume $D_\eta$ is big.
		We fix a sufficiently ample divisor $H$ on $X$ such that $H\pm D$ are ample over $T$.
		
		Suppose $S$ is a variety over $\mathbb{C}$ and $\pi:S\rightarrow T$ is a finite cover. Let $X_S:=X\times_ T S$ and $D_S$ be the pullback of $D$ on $X_S$. Let $S':=\pi^{-1}(T')$, then $S'$ is dense in $S$ since $T'$ is dense in $T$. Let $X_{\eta_S}$ be the generic fiber of $X_S\rightarrow S$, then it is easy to see that 
		$$\vol(D_\eta)=\vol(D_S|_{X_{\eta_S}})\text{ and }\vol(D_t)=\vol(D_S|_{X_{S,s}})$$
		for every closed point $t\in T$ and $s\in \pi^{-1}(t)$. Thus
		$$\vol(D_\eta)=\inf_{t\in T'} (D_t)\text{ if and only if }\vol(D_{\eta_S})=\inf_{s\in S'}\vol(D_S|_{X_{S,s}}).$$
		So to prove the result we are free to pass to a finite cover of $T$.

		Let $K:=k(T)$ and $\overline{K}$ be its algebraic closure. Let $X_{\bar{\eta}}$ be the base change of $X_\eta$ to $\overline{K}$ and $D_{\bar{\eta}}$ the pullback of $D_\eta$ on $X_{\bar{\eta}}$.
		For any $\epsilon\in \mathbb{R}_+$, let $g_{\bar{\eta}}:Y_{\bar{\eta}}\rightarrow X_{\bar{\eta}}$ be a Fujita approximation of $D_{\bar{\eta}}$ such that $g_{\bar{\eta}}^*D_{\bar{\eta}}\sim_{\mathbb{Q}} A_{\bar{\eta}}+E_{\bar{\eta}}$, where $Y_{\bar{\eta}}$ is smooth, $E_{\bar{\eta}}\geq 0$, $A_{\bar{\eta}}$ is ample, and 
		$$ \vol(D_{{\bar{\eta}}})\leq \vol(A_{\bar{\eta}})+\epsilon.$$
		
		Since the morphism $g_{\bar{\eta}}$ and $\mathbb{Q}$-divisors $A_{\bar{\eta}},E_{\bar{\eta}}$ are determined by finitely many equations in $\overline{K}$, they are defined over a finite extension of $K$. Therefore, after taking the corresponding finite base change and replacing $T$ and $\eta$, we can assume that $g_{\bar{\eta}}$ induces a birational morphism $g_\eta:Y_\eta\rightarrow X_\eta$ and $A_{\bar{\eta}},E_{\bar{\eta}}$ are the pullback of $\mathbb{Q}$-divisors $A_\eta,E_\eta$ on $Y_\eta$. It is easy to see that $Y_\eta$ is smooth, $E_\eta\geq 0$, and $A_\eta$ is ample. 
		
		Let $Y$ be a smooth compactification of $Y_\eta$ over $T$, $A$ and $E$ the closures of $A_\eta$ and $E_\eta$ in $Y$. Since $A_\eta$ is ample and $E_\eta$ is effective, after shrinking $T$, we may assume that 
		\begin{itemize}
			\item $A$ is ample over $T$,
			\item $\Supp(E)$ is flat over $T$,
			\item $X\rightarrow T,Y\rightarrow T$ are flat with reduced and irreducible fibers,
			\item $Y_\eta\rightarrow X_\eta$ induces a fiberwise birational morphism $g:Y\rightarrow X$ over $T$, and
			\item $g^*D\sim_{\mathbb{Q}} A+E$.
		\end{itemize}
		
		We assume $d:=\mathrm{dim}(X/Z)$.
		Since $H$ is sufficiently ample such that $H\pm D$ are ample over $T$, then $H_{\bar{\eta}}\pm D_{\bar{\eta}}$ are ample. By \cite[Theorem 11.4.21]{Laz04}, there exists a universal number $C\in \mathbb{R}_+$ such that 
		$$(A_{\bar{\eta}}^{d-1}\cdot E_{\bar{\eta}})^2 \leq C\cdot (H_{\bar{\eta}}^d) \cdot (\vol(D_{\bar{\eta}})-\vol(A_{\bar{\eta}}))\leq C\epsilon \cdot (H_{\bar{\eta}}^d).$$
		Let $C':=\sqrt{C\cdot H_{\bar{\eta}}^d}$, then $C'$ depends only on $H$ and $A_{\bar{\eta}}^{d-1}\cdot E_{\bar{\eta}}\leq C'\sqrt{\epsilon}$.
		
		Because $A_\eta^{d-1}\cdot E_\eta= A_{\bar{\eta}}^{d-1}\cdot E_{\bar{\eta}}$, then
		$A_\eta^{d-1}\cdot E_\eta\leq C' \sqrt{\epsilon}$ and
		$$A_\eta^{d-1}\cdot (A_\eta+E_\eta)\leq A_\eta^d+ C' \sqrt{\epsilon}.$$
		Because intersection numbers are locally constant in a flat family, then 
		\begin{equation}\label{Equation 1}
			A_t^{d-1}\cdot (A_t+E_t)\leq A_t^d +C' \sqrt{\epsilon}
		\end{equation}
		for every closed point $t\in T$.
		
		Note $A_t+E_t\sim_{\mathbb{Q}} g_t^*D_t$ for every $t\in T$. From now we fix a closed point $t\in T'$. By the Fujita approximation, for any $\epsilon'\in \mathbb{R}_+$ there exists a projective birational morphism $h_t:W_t\rightarrow Y_t$ such that $h_t^*g_t^*D_t\sim_{\mathbb{Q}} h_t^*(A_t+E_t)\sim_{\mathbb{Q}} A'_t+E'_t$, where $E'_t\geq 0$, $A'_t$ is ample, and $\vol(A'_t)\geq \vol(A_t+E_t)-\epsilon'=\vol(D_t)-\epsilon'$. 
		By the projection formula, we have
		\begin{equation}\label{Equation 2}
			(h_t^*A_t)^{d-1}\cdot A'_t\leq (h_t^*A_t)^{d-1}\cdot (A'_t+E'_t)= A_t^{d-1}\cdot (A_t+E_t).
		\end{equation}
		By the Generalized Inequality of Hodge type, see \cite[Theorem 1.6.1]{Laz04}, we have
		\begin{equation}\label{Equation 3}
			(A_t^d)^{\frac{d-1}{d}}(A_t^{'d})^{\frac{1}{d}}\leq (h_t^*A_t)^{d-1}\cdot A'_t.
		\end{equation}
		
		Now combine Equation \eqref{Equation 1}, \eqref{Equation 2}, and \eqref{Equation 3}, we have
		$$(A_t^d)^{\frac{d-1}{d}}(A_t^{'d})^{\frac{1}{d}}\leq (h_t^*A_t)^{d-1}\cdot A'_t \leq A_t^{d-1}\cdot (A_t+E_t) \leq  A_t^d +C' \sqrt{\epsilon}.$$
		Thus
		$$A_t^{'d}\leq ((A_t^d)^{\frac{1}{d}}+\frac{C'\sqrt{\epsilon}}{(A_t^d)^{\frac{d-1}{d}}})^d.$$
		
		Because $\vol(D_\eta)\geq A_\eta^d =\vol(A_\eta)\geq  \vol(D_\eta)-\epsilon$, $\vol(A_\eta)=\vol(A_t)$ for every $t\in T$ and $A_t^{'d}\geq \vol(D_t)-\epsilon'$, then
		\begin{equation}\label{inequality of volume}
			\vol(D_t)\leq ((\vol(D_\eta))^{\frac{1}{d}}+\frac{C'\sqrt{\epsilon}}{(\vol(D_\eta)-\epsilon)^{\frac{d-1}{d}}})^d+\epsilon'.
		\end{equation}
		
		Since $T'\subset T$ is dense, for any $\epsilon''\in \mathbb{R}_+$, by choosing $\epsilon$ and $\epsilon'$ small enough in Equation \eqref{inequality of volume} and shrinking $T$ accordingly, there always exists $t\in T'$ such that
		$\vol(D_t)\leq  \vol(D_\eta)+\epsilon''.$ Then we have
		$$\vol(D_\eta)=\inf _{t\in T'} \vol(D_t).$$
	\end{proof}
	
	\begin{proof}[{Proof of Theorem \ref{upper semicontinuous of volume}}]
		We may assume $a\in \mathbb{R}_+$.
		Fix $t_0\in T$ such that $\vol(D_{t_0})<a$, it suffices to prove that there exists an open neighborhood $U$ of $t_0$ such that 
		$$\vol(D_t)<a$$ 
		for any $t\in U$.
		
		Let $T':=\{t\in T\mid \vol(D_t)\geq a\}$ and $\overline{T'}$ be its closure, it suffices to show that $t_0\not \in \overline{T'}$. 
		
		Suppose there exists an irreducible component $S$ of $\overline{T'}$ such that $t_0\in S$. Let $\eta$ be the generic point of $S$, then according to the upper semi-continuity of cohomology we have
		$$a>\vol(D_{t_0})\geq \vol(D_\eta).$$
		
		Define $X_S:=X\times_T S$. Since $X\rightarrow T$ is projective, flat, with reduced and irreducible fibers, then $X_S\rightarrow S$ is also projective, flat, with reduced and irreducible fibers. Because by assumption $T'\cap S$ is dense in $S$, then by Theorem \ref{Main theorem: densely semicontinuous of volumes}, we have
		$$\vol(D_\eta)=\inf_{t\in T'\cap S} \vol(D_t)\geq a,$$
		which contradicts with that $a> \vol(D_\eta)$. Thus $t_0\not \in \overline{T'}$.
	\end{proof}

		\noindent\textbf{Acknowledgments}. The author would like to thank his advisor Christopher Hacon for many useful suggestions, discussions, and his generosity. He would also like to thank his postdoc mentor Caucher Birkar for his encouragement and constant support. The author also acknowledges Lingyao Xie, Bingyi Chen, Minzhe Zhu, and Xiaowei Jiang for their valuable comments. This work was supported by BMSTC and ACZSP. 	
	
\end{document}